\documentclass[12pt]{article}
\usepackage{sigsam, amsmath,amsthm,amssymb}

\newtheorem{thm}{Theorem}[section]

\theoremstyle{definition}
\newtheorem{example}[thm]{Example}
\newtheorem{lem}[thm]{Lemma}

\theoremstyle{definition}

\newtheorem{rem}[thm]{Remark}
\numberwithin{equation}{section}
 \usepackage{tikz}
\issue{TBA}
\articlehead{TBA}
\titlehead{Exploiting Symmetry in the Power Flow Equations Using Monodromy}
\authorhead{Julia Lindberg, Nigel Boston, Bernard C. Lesieutre}
\setcounter{page}{1}

\begin{document}

\title{Exploiting Symmetry in the Power Flow Equations Using Monodromy}

\author{Julia Lindberg${^1}$, Nigel Boston${^{1,2}}$, Bernard C. Lesieutre${^1}$ \\
Department of Electrical and Computer Engineering${^1}$, Department of Mathematics${^2}$ \\
University of Wisconsin-Madison \\
Madison, USA, 53706 \\
\url{jrlindberg@wisc.edu,nboston@wisc.edu,lesieutre@wisc.edu}}

\date{}

\maketitle

\begin{abstract}
We propose solving the power flow equations using monodromy. We prove the variety under consideration decomposes into trivial and nontrivial subvarieties and that the nontrivial subvariety is irreducible. We also show various symmetries in the solutions. We finish by giving numerical results comparing monodromy against polyhedral and total degree homotopy methods and giving an example of a network where we can find all solutions to the power flow equation using monodromy where other homotopy techniques fail. This work gives hope that finding all solutions to the power flow equations for networks of realistic size is possible.
\end{abstract}

\section{Introduction}
The \textit{optimal power flow problem} is a nonconvex quadratically constrained quadratic program that seeks to minimize generation costs of electricity subject to demand constraints, as well as physical and engineering constraints. The \textit{power flow equations} are a system of quadratic equations that give the nonconvex constraints in the optimal power flow problem. Recent work has used techniques from algebraic geometry to find all solutions to the power flow equations for a fixed network \cite{mehta2016numerical, zachariah2017efficiently, chen2018on}, as well as study the distribution of the number of real solutions \cite{lindberg2018the, lesieutre2019on,zachariah2018distributions}. While there has been some success using numerical and symbolic techniques, these methods don't take advantage of any of the symmetry in the power flow equations. In this paper we apply monodromy techniques to find all complex solutions to the power flow equations for networks of varying size. 

\section{The Power Flow Equations}\label{2}
We model an $n$-node electric power network as a biconnected\footnote{A graph is \textit{biconnected} if removing any vertex does not disconnect the graph. For a graph that is not biconnected, the power flow equations decouple so they are analyzed separately on smaller graphs.}, undirected graph, $G = (V, E)$, where each vertex represents a node in the power network. There is an edge, $e_{km}$ between vertices $v_k$ and $v_m$ if the corresponding nodes in the power network are connected. Each edge has a known \textit{susceptance} $b_{km} \in \mathbb{R}$. At each node, $k$, the relationship between the active power flows is captured by the nonlinear relations
\begin{equation}
\begin{aligned}
    x_k^2 + y_k^2 &= 1 \\
    \sum_{m=0}^{n-1} b_{km}(x_ky_m - x_m y_k) &= P_k \label{pfeq2}
\end{aligned}
\end{equation}
where $P_k \in \mathbb{R}$ are the \textit{active power injections} for $k = 1,\ldots,n-1$. We fix $v_0$ to be the \textit{reference node}, meaning $x_0 = 1$ and $y_0 = 0$. Under these assumptions our power system is \textit{lossless} and it has all \textit{PV nodes}. We assume our network has \textit{zero power injections} so $P_k = 0$ for all $k = 1, \ldots, n-1$. Under these assumptions the parameters are the susceptances $b_{km}$ and the variables are $x_k,y_k$ for $k=1,\ldots,n-1$.  For fixed $b_{km} \in \mathbb{R}$ we aim to find all complex solutions to \eqref{pfeq2}.

\section{Symmetry in Solutions} \label{3}
We briefly outline the idea of monodromy below using the same notation as \cite{duff2019solving}, but give \cite{duff2019solving, amendola2016solving, campo2017critical}, as more complete references. Let $F_b$ be a parameterized polynomial system in $N$ variables and call the space of all such polynomial systems $B$. Assume the solution set of $F_b$ is zero dimensional. Let $\mathcal{V}$ denote the \textit{solution variety} of $F_b$ i.e. $\mathcal{V} = \{(F_b, x) \in B \times \mathbb{C}^N : F_b(x) = 0\}$. Consider the projection $\pi : \mathcal{V} \to B$ that maps a pair $(F_b,x) \mapsto F_b$ and the fibers $\pi^{-1}(F_b) = \{x \in \mathbb{C}^N : F_b(x) = 0 \}$. For almost all choices of parameters in $B$, $|\pi^{-1}(F_b)| = K$ is constant. Define $D$ to be the \textit{discriminant locus} of $F_b$, this is the set of measure zero in $B$ where $|\pi^{-1}(F_b)| \neq K$. We define the \textit{fundamental group} $\pi_1(B \backslash D)$ as a set of loops modulo homotopy equivalence that start and finish at a point $b \in B \backslash D$. Each loop permutes elements in $\pi^{-1}(F_b)$ and induces a group action called the \textit{monodromy action}. Monodromy methods work by taking one solution $\hat{x}$ to the system of equations $F_{\hat{b}}$ and finding other elements of $\pi^{-1}(F_{\hat{b}})$ via the monodromy action. The monodromy action is transitive if and only if the variety $V(F_b)$ is irreducible.

Under the assumptions given in Section \ref{2}, for any graph $G$, equations \eqref{pfeq2} have $2^{n-1}$ \textit{trivial solutions} of the form $(x_k,y_k) = (\pm 1, 0)$. Therefore, our problem is reduced to finding all nontrivial solutions of \eqref{pfeq2}. In order to apply monodromy and have any hope of finding all nontrivial solutions we need the nontrivial solutions to form an irreducible variety.
\begin{lem} \label{irreduciblelemma}
 The nontrivial solutions of \eqref{pfeq2} form an irreducible variety.
\end{lem}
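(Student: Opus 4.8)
The plan is to pass from the ``circle plus bilinear form'' presentation to an equivalent one in which the equations become \emph{linear in the susceptances}, and then to show that the resulting incidence variety has a single relevant component. First I would substitute $u_k=x_k+iy_k$, so that $x_k^2+y_k^2=1$ becomes $u_k\in\mathbb{C}^{*}$ (with $u_0=1$), $x_k=\tfrac12(u_k+u_k^{-1})$, $y_k=\tfrac1{2i}(u_k-u_k^{-1})$, and a short computation gives $x_ky_m-x_my_k=\tfrac1{2i}(u_m/u_k-u_k/u_m)$. Under this change of coordinates the ambient locus $\{x_k^2+y_k^2=1:k\}$ is the torus $T=(\mathbb{C}^{*})^{n-1}$, and the second family of equations in \eqref{pfeq2} reads $M(u)\,b=0$, where $b=(b_{km})_{km\in E}\in\mathbb{C}^{|E|}$ and $M(u)$ is the product of the oriented incidence matrix of $G$ with its $0$-th row deleted and the diagonal matrix with entries $w_{km}(u)=(u_m^2-u_k^2)/(2i\,u_ku_m)$. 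Thus the solution variety becomes $\mathcal{V}=\{(u,b)\in T\times\mathbb{C}^{|E|}:M(u)b=0\}$, with first projection $\Phi\colon\mathcal{V}\to T$ whose fiber over $u$ is the linear space $\ker M(u)$. The trivial solutions are exactly the $2^{n-1}$ points $u=\epsilon\in\{\pm1\}^{n-1}$, where every weight vanishes and $\Phi^{-1}(\epsilon)=\mathbb{C}^{|E|}$; these are the trivial components $\{\epsilon\}\times\mathbb{C}^{|E|}$, which I set aside.

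Next I would compute the rank of $M(u)$. Since $G$ is connected, the rank of a weighted incidence matrix with one row removed equals $n$ minus the number of connected components of the weight-support graph $G_u:=(V,\{km\in E:w_{km}(u)\neq0\})=(V,\{km:u_k^2\neq u_m^2\})$; in particular $\operatorname{rk}M(u)=n-1$ exactly when $G_u$ is connected. Let $T^{\circ}=\{u\in T:G_u\text{ connected}\}$, a nonempty (hence dense) open subset of $T$. Over $T^{\circ}$ the set $\Phi^{-1}(T^{\circ})$ is the total space of the rank-$(|E|-n+1)$ kernel bundle over $T^{\circ}$; since $T^{\circ}$ is rational (open in a torus) and the total space of a vector bundle over a rational base is rational, its closure $Z:=\overline{\Phi^{-1}(T^{\circ})}\subseteq\mathcal{V}$ is irreducible (indeed rational) of dimension $|E|$, it contains every point of $\mathcal{V}$ lying over $T^{\circ}$, and its generic point has nontrivial $u$. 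So $Z$ is one irreducible nontrivial component, and the content of the lemma is that it is the only component, apart from the trivial ones, that carries solutions for generic $b$.

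The key step, which I expect to be the real work, is to show that the part of $\mathcal{V}$ lying over the rank-drop locus $R:=T\setminus T^{\circ}$ does not dominate the parameter space $\mathbb{C}^{|E|}$; equivalently, that for $b$ outside a proper subvariety every nontrivial solution $(u,b)$ has $G_u$ connected, hence lies in $Z$. I would stratify $R$ by the component structure $\mathcal{P}=\{V_1,\dots,V_c\}$ of $G_u$, with $2\le c\le n-1$ and each $G[V_i]$ connected; writing $E_{\mathcal{P}}$ for the set of edges of $G$ joining distinct blocks, one has $R_{\mathcal{P}}\subseteq\{u\in T:u_k^2=u_m^2\ \forall\,km\in E_{\mathcal{P}}\}$, and passing to the variables $u_k^2$ shows the latter set has dimension one less than the number of connected components of $(V,E_{\mathcal{P}})$. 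The combinatorial input needed is that $(V,E_{\mathcal{P}})$ has at most $n-c$ components, i.e. $\operatorname{rk}(E_{\mathcal{P}})\ge c$ in the cycle matroid $M(G)$, and this is precisely where biconnectivity enters: since $G$ is biconnected, $M(G)$ is a connected matroid, so for the edge partition $E=E_{\mathcal{P}}\sqcup E_{\mathrm{within}}$ (with $E_{\mathrm{within}}=\bigsqcup_iE(G[V_i])$, nonempty because $c\le n-1$) matroid connectivity gives $\operatorname{rk}(E_{\mathcal{P}})+\operatorname{rk}(E_{\mathrm{within}})-\operatorname{rk}(E)\ge1$; as $\operatorname{rk}(E_{\mathrm{within}})=n-c$ and $\operatorname{rk}(E)=n-1$, this forces $\operatorname{rk}(E_{\mathcal{P}})\ge c$. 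Hence $\dim R_{\mathcal{P}}\le n-c-1$, so $\dim\Phi^{-1}(R_{\mathcal{P}})\le(n-c-1)+(|E|-(n-c))=|E|-1$, and $\Phi^{-1}(R)$ cannot dominate $\mathbb{C}^{|E|}$.

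Putting the pieces together: for generic $b$ every nontrivial solution lies in $\Phi^{-1}(T^{\circ})\subseteq Z$, the set $Z$ is irreducible, and $Z$ dominates $\mathbb{C}^{|E|}$ because the system has finitely many solutions for generic parameters; therefore the variety of nontrivial solutions — the unique component of $\mathcal{V}$ other than the trivial ones that dominates the parameter space — is irreducible, and the monodromy group acts transitively on the nontrivial solutions. The main obstacle in the argument is entirely the dimension bound on the rank-drop strata; the rest is bookkeeping around the substitution and the kernel-bundle structure, and the decisive observation is to recognize the inequality $\operatorname{rk}(E_{\mathcal{P}})\ge c$ as connectivity of the cycle matroid, which is exactly the biconnectivity hypothesis.
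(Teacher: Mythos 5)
Your argument is correct, and its core coincides with the paper's: rationally parametrize each circle $x_k^2+y_k^2=1$ (the paper uses the tangent half-angle substitution $x_k=2t_k/(1+t_k^2)$, $y_k=(1-t_k^2)/(1+t_k^2)$; your $u_k=x_k+iy_k$ is an equivalent choice), observe that \eqref{pfeq2} then becomes linear in the susceptances with coefficient matrix a weighted incidence matrix of $G$ of rank $n-1$ for generic solution coordinates, and deduce irreducibility of the incidence variety from the fact that it fibers over an irreducible base with linear fibers of constant dimension $|E|-n+1$. The paper stops essentially there, delegating the conclusion to Remark 2 of Duff et al. Where you genuinely go beyond the paper is your third paragraph: you prove that the part of $\mathcal{V}$ lying over the rank-drop locus (where the support graph $G_u$ disconnects) has dimension at most $|E|-1$ stratum by stratum, hence cannot dominate the parameter space, so that for generic $b$ every nontrivial solution really does lie in the single component $Z$ while the $c=n$ stratum accounts exactly for the $2^{n-1}$ trivial components. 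That step is what makes the decomposition into ``trivial plus one nontrivial component'' airtight, and it is also the only place where the standing biconnectivity hypothesis does real work (connectivity of the cycle matroid giving $\operatorname{rk}(E_{\mathcal{P}})\ge c$); the paper's proof uses only connectedness and $|E|\ge n$ and leaves this accounting implicit, even though biconnectivity is genuinely needed for the lemma (for a graph with a cut vertex the system decouples and the nontrivial locus becomes reducible). The one assertion you wave at rather than prove is that $Z$ dominates $\mathbb{C}^{|E|}$; this is harmless for irreducibility itself, and since $\dim Z=|E|$ the fibers of $Z$ over generic $b$ are automatically finite whenever the map is dominant, so nothing in the lemma is at risk.
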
{}
\begin{proof}
By Theorem $6$ of \cite{chen2018counting}, the nontrivial component of \eqref{pfeq2} for tree networks is empty, so this statement is vacuously true. Consider the change of variables $x_i = \frac{2t_i}{1+t_i^2}$ and $y_i = \frac{1-t_i^2}{1+t_i^2}$. This gives a new system of equations for $k=1,\ldots,n-1$
\begin{align}
    0 &= \sum_{m=0}^{n-1} b_{km} \Big( \frac{2t_k(1-t_m^2)-2t_m(1-t_k^2)}{(1+t_k^2)(1+t_m^2)} \Big). \label{paramequs}
\end{align}{}

\noindent By Remark $2$ of \cite{duff2019solving} is suffices to show that the following map has dense image:
\begin{align*}
    \pi &: \mathcal{V} \to \mathbb{C}^{n-1} \\
    &(F_b, t) \mapsto t
\end{align*}{}
where $F_b$ is the system of equations defined in \eqref{paramequs} and $t = (t_1,\ldots, t_{n-1})$. For all $t_k \in \mathbb{C} \backslash \{ \pm \sqrt{-1},0, \pm 1 \} $, $k = 1,\ldots, n-1$, this gives a linear system of $n-1$ equations in $|E|$ unknowns where the unknowns are the susceptances $b_{km}$. Since we do not consider trees, $|E| \geq n$. Let $b \in \mathbb{R}^{|E|}$ be the vector of susceptances. Then this linear system can be written as $Ab = 0$ where $A\in \mathbb{C}^{n-1 \times |E|}$ is  a weighted incidence matrix of $G$ with the first row removed. This matrix  has rank $n-1$ so long as none of the weights are zero, which occurs for all $t_k, t_m \not\in \{\pm \sqrt{-1}, 0 , \pm 1 \}, t_k \neq t_m$. Therefore, for generic $t \in \mathbb{C}^{n-1}$ we can find a nonzero solution $b$ to $(\ref{paramequs})$ giving that the map $(F_b, t) \mapsto t$ is dense in $\mathbb{C}^{n-1}$.
\end{proof}{}

In addition to ignoring the trivial component, we also wish to exploit the symmetry of \eqref{pfeq2}.

\begin{lem}
If $(x_1,\ldots,x_{n-1},y_1,\ldots,y_{n-1})$ is a solution to \eqref{pfeq2}, so is $(x_1,\ldots,x_{n-1},-y_1,\ldots,-y_{n-1})$
\end{lem}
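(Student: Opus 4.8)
The plan is to verify directly that the substitution $y_k \mapsto -y_k$ (leaving every $x_k$ fixed) carries a solution of \eqref{pfeq2} to another solution. Since the reference node has $y_0 = 0$, negating it changes nothing, so the transformation is well defined on the full coordinate vector $(x_0,\dots,x_{n-1},y_0,\dots,y_{n-1})$. There are two families of equations to check, and each is a one-line computation.

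First I would look at the magnitude equations $x_k^2 + y_k^2 = 1$. These are manifestly invariant under $y_k \mapsto -y_k$ because $y_k$ appears only through $y_k^2$. Second I would look at the power-balance equations $\sum_{m=0}^{n-1} b_{km}(x_k y_m - x_m y_k) = P_k$. Under the substitution, the left-hand side becomes $\sum_{m} b_{km}(x_k(-y_m) - x_m(-y_k)) = -\sum_{m} b_{km}(x_k y_m - x_m y_k) = -P_k$. Here is where I would invoke the standing assumption from Section~\ref{2} that the network has zero power injections, i.e.\ $P_k = 0$ for all $k = 1,\dots,n-1$: then $-P_k = 0 = P_k$, so the power-balance equations are satisfied as well. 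Hence the transformed point lies in the solution variety.

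The only subtlety worth flagging is that this symmetry is \emph{not} a symmetry of \eqref{pfeq2} for arbitrary nonzero $P_k$; it relies essentially on the $P_k = 0$ hypothesis, which is exactly the regime the paper works in. So the ``main obstacle'' is really just making sure the reader recognizes that the statement is implicitly conditioned on the zero-injection assumption already in force — there is no genuine mathematical difficulty, and no irreducibility input or monodromy machinery is needed here. Geometrically this reflects the fact that complex conjugation of the phasor variables (or reflection of the rotor angles $\theta_k \mapsto -\theta_k$ in the angle parametrization $x_k = \cos\theta_k$, $y_k = \sin\theta_k$) is a symmetry of a lossless network with no net power flow.
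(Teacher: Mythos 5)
Your proof is correct and matches the paper's argument, which is simply the direct substitution you carry out (the paper states the result is "immediate" upon substituting). Your added observation that the sign flip on the power-balance equations is absorbed only because $P_k = 0$ is a worthwhile clarification but not a different method.
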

\begin{proof}
Substituting in $(x_1,\ldots,x_{n-1},-y_1,\ldots,-y_{n-1})$ to \eqref{pfeq2} the result is immediate.
\end{proof}

\begin{lem}
Let $G = (V,E)$ be a bipartite graph with disjoint vertex sets $S,T \subset V$ that partition $V$ where for all $e = v_mv_n \in E$, $v_m \in S$ and $v_n \in T$. Without loss of generality, say $v_1,\ldots,v_s \in S$ and $v_{s+1},\ldots,v_{n-1} \in T$. If $(x_1,\ldots,x_{n-1},y_1,\ldots,y_{n-1})$ is a solution to \eqref{pfeq2} so is
\begin{enumerate}
    \item $(x_1,\ldots,x_{n-1},-y_1,\ldots,-y_{n-1})$
    \item $(-x_1,\ldots,-x_s,x_{s+1},\ldots,x_{n-1},y_1,\ldots,y_s,-y_{s+1},\ldots,-y_{n-1})$
    \item $(-x_1,\ldots,-x_s, x_{s+1},\ldots,x_{n-1},-y_1,\ldots,-y_s,y_{s+1},\ldots,y_{n-1})$
\end{enumerate}
\end{lem}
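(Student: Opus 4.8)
The plan is to verify each of (1)--(3) by substituting the displayed transformation directly into \eqref{pfeq2}, organizing the sign bookkeeping through a single $\pm 1$-valued vector attached to the bipartition. Claim (1) is nothing but the preceding lemma, so it needs no new argument; recall that it rests on the hypothesis $P_k = 0$, since replacing $y$ by $-y$ multiplies the left side of the second equation by $-1$ while leaving the first untouched. For (2) and (3) I would first observe that, because the reference node has $x_0 = 1$ fixed and is therefore never negated, these transformations are only well defined on the solution set when $v_0 \in T$; accordingly I would begin by noting that we may assume $v_0 \in T$ after interchanging the names of $S$ and $T$ if necessary, which alters neither $E$ nor the system \eqref{pfeq2}.

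Next, set $\sigma_i = -1$ if $v_i \in S$ and $\sigma_i = +1$ if $v_i \in T$, for $i = 0,1,\ldots,n-1$, so that $\sigma_0 = +1$. In this notation transformation (2) is $(x_i,y_i) \mapsto (\sigma_i x_i, -\sigma_i y_i)$ and transformation (3) is $(x_i,y_i)\mapsto(\sigma_i x_i, \sigma_i y_i)$, and both fix $(x_0,y_0)=(1,0)$. The one structural fact I would use is that, $G$ being bipartite with parts $S$ and $T$, we have $b_{km}=0$ whenever $v_k$ and $v_m$ lie in the same part; equivalently, every summand of the second equation with $b_{km}\neq 0$ satisfies $\sigma_k\sigma_m = -1$.

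For (2), the equation $x_k^2 + y_k^2 = 1$ is preserved since only signs change. For the second equation, each summand transforms as $x_k'y_m' - x_m'y_k' = (\sigma_k x_k)(-\sigma_m y_m) - (\sigma_m x_m)(-\sigma_k y_k) = -\sigma_k\sigma_m(x_k y_m - x_m y_k)$, and whenever $b_{km}\neq 0$ we have $-\sigma_k\sigma_m = 1$, so the summand is unchanged; hence $\sum_m b_{km}(x_k'y_m' - x_m'y_k') = \sum_m b_{km}(x_k y_m - x_m y_k) = 0$. The term $m=0$ is harmless: if $v_k \in T$ then $b_{k0}=0$ by bipartiteness, and if $v_k \in S$ the same identity applies, consistently with $y_0' = -\sigma_0 y_0 = 0$ and $x_0' = \sigma_0 x_0 = 1$.

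Finally, transformation (3) is the composite of (2) followed by (1): under $y\mapsto -y$ the pair $(\sigma_i x_i, -\sigma_i y_i)$ becomes $(\sigma_i x_i, \sigma_i y_i)$, so since (1) and (2) each preserve the solution set of \eqref{pfeq2}, so does (3); alternatively one repeats the one-line summand computation verbatim. I do not anticipate any genuine obstacle: the only points requiring care are the status of the reference node, handled by the reduction to $v_0\in T$, and the consistent tracking of signs, handled by the vector $\sigma$; everything else is a routine substitution.
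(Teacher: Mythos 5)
Your argument is correct and follows the same route as the paper's: verify (1)--(3) by direct substitution into \eqref{pfeq2}, using bipartiteness to guarantee that every summand with $b_{km}\neq 0$ couples the two parts, and obtain (3) as the composite of (1) and (2). The one point where you go beyond the paper is the reference node, and that extra care is warranted rather than pedantic. The paper's proof displays the node equations as sums over $m=s+1,\ldots,n-1$ (resp.\ over $m=1,\ldots,s$), silently dropping the terms $b_{k0}(x_ky_0-x_0y_k)=-b_{k0}y_k$ that couple a node to $v_0$; these are precisely the terms that can break (2) and (3). For instance, on the $4$-cycle $v_0v_1v_2v_3$ with parts $\{v_0,v_2\}$ and $\{v_1,v_3\}$, choosing $S=\{v_2\}$ (so that $v_0$ sits on the negated side) sends the left-hand side of the equation at $v_1$ from $-b_{10}y_1+b_{12}(x_1y_2-x_2y_1)$ to $+b_{10}y_1+b_{12}(x_1y_2-x_2y_1)$; these differ by $2b_{10}y_1$, so they cannot both vanish at a solution with $y_1\neq 0$. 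Your normalization $v_0\in T$ is therefore exactly the hypothesis under which (2) and (3) hold, and your $\sigma$-bookkeeping, together with the observation that $\sigma_0=+1$ makes both transformations fix $(x_0,y_0)=(1,0)$, closes the argument cleanly. I would only add a sentence acknowledging that this normalization is a genuine (if mild) sharpening of the lemma as literally stated, which never specifies which part contains $v_0$.
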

\begin{proof}
At a node $k \in S$ the power flow equations are
\begin{align}
    0 &= \sum_{m=s+1}^{n-1} b_{km}(x_m y_k - x_k y_m).
\end{align}
At a node $l \in T$ the power flow equations are
\begin{align}
   0 &= \sum_{m=1}^sb_{lm}(x_my_l-x_ly_m).
\end{align}
Substituting in $(1)-(3)$ to the two expressions above, the result is clear.
\end{proof}

\section{Numerical Simulations}\label{5}
For all numerical computations we use the package \texttt{HomotopyContinuation.jl} \cite{HomotopyContinuationjulia}. Tables~\ref{completepaths} and \ref{cyclicpaths} show the average number of loops tracked to find all solutions using monodromy in comparison with the number of paths needed to track for polyhedral and total degree homotopy continuation methods. These tables also show the time it took to find all solutions. In each case, monodromy is the fastest method to find all solutions.

\begin{table}[h!]
\centering
\begin{tabular}{ |c|c|c|c|c|c|c|c| }
\hline 
 & $K_4$ & $K_5$ & $K_6$ & $K_7$ & $K_8$ & $K_9$ & $K_{10}$ \\
 \hline
 $\#$ of loops: monodromy & $24.4$ & $94.8$ & $470.4$ & $1669.2$ & $7915$ & $25112.2$ & $95829.2$ \\ 
 $\#$ of paths: polyhedral & $40$ & $192$ & $864$ & $3712$ & $15488$ & $63488$ & $257536$ \\ 
$\#$ of paths: total degree & $64$ & $256$ & $1024$ & $4096$ & $16384$ & $65536$ & $262144$ \\ 
 \hline
 time (s): monodromy & $0.01$ & $0.05$ &$0.37$  & $1.97$ & $16.39$ & $65.33$ & $357.926$ \\
 time (s): polyhedral & $0.06$ & $0.37$ & $2.53$ & $17.10$ & $112.43$ & $609.49$ & $2637.22$ \\
 time (s): total degree & $0.04$ & $0.21$ & $1.45$ & $8.17$ & $48.78$ & $329.60$ & $1510.01$  \\
 \hline
\end{tabular}
\caption{Numerical results to find all solutions for complete networks}
\label{completepaths}
\end{table}

\begin{table}[h!]
\centering
\begin{tabular}{ |c|c|c|c|c|c|c| }
\hline 
 & $C_5$ & $C_6$ & $C_7$ & $C_8$ & $C_9$ & $C_{10}$ \\
 \hline
 $\#$ of loops: monodromy & $181.6$ & $176.6$ & $962.0$ & $921.6$ & $1110.6$ & $752$  \\ 
 $\#$ of paths: polyhedral & $80$& $256$ & $832$ & $2688$ & $8704$ & $28160$ \\ 
$\#$ of paths: total degree & $256$ & $1024$ & $4096$ & $16384$ & $65536$ & $262144$  \\ 
 \hline
 time (s): monodromy & $0.13$ & $0.158$ &$1.10$ & $1.46$ & $2.48$ & $2.60$   \\
 time (s): polyhedral & $2.7$ & $3.03$ & $5.37$ & $14.8$ & $56.36$ & $211.24$  \\
 time (s): total degree &  $2.11$& $3.40$ & $9.76$ & $31.91$ & $200.41$ & $862.50$  \\
 \hline
\end{tabular}
\caption{Numerical results to find all solutions for cyclic networks}
\label{cyclicpaths}
\end{table}

\begin{rem}
Another advantage to monodromy is that we don't lose solutions as we track them from a start to target system. We observed in small networks that all methods found all solutions but for larger networks polyhedral and total degree homotopy methods lost solutions.
\end{rem}
 
\begin{example}
A final benefit of monodromy when applied to the power flow equations is that it can find all complex solutions when other methods can't. We consider the cyclic graph on $20$ vertices. This system has $1,847,560$ complex solutions but ignoring the trivial solutions and up to symmetry it has $330,818$. If we tried to use total degree homotopy on this system, the Bezout bound is $274,877,906,944$ so we would have to track over $274$ billion paths. In addition, polyhedral methods aren't practical as the solver could not find a start system. Using monodromy we found all $330,818$ solutions in $15,375$ seconds after tracking 792,934 loops. This example is the largest network to the authors' knowledge for which all solutions to the power flow equations have been found for a power system model.\footnote{The authors' note that in \cite{coss2018locating} all real solutions to a network on $60$ vertices were found, but as noted by the authors in \cite{coss2018locating}, the assumptions in that paper are not attainable by any realistic power systems model.}
\end{example}

\section{Conclusion}
 
In this note we applied monodromy methods to the power flow equations with great results. Monodromy methods gain tremendous computational speed-up by decomposing the variety into trivial and nontrivial components and solving up to symmetry. Finally, we are able to push the current computational limits and find all solutions to the power flow equations for the cylic graph on $20$ nodes, the largest power network in which all solutions have been found to date.

\section*{Acknowledgements}
The authors' gratefully thank Jose Israel Rodriguez for his helpful comments and insight.

\bibliographystyle{unsrt}

\bibliography{main}

\end{document}